\newtheorem{theorem}{Theorem}[section]
\newtheorem{lemma}{Lemma}[section]
\newtheorem{definition}{Definition}[section]
\newcounter{romnum}
\newcounter{arabicnum}
\begin{document}
\title[On a discrete elliptic problem with a weight]
{On a discrete elliptic problem with a weight}

\author[M. Ousbika]{Mohamed Ousbika}
\address{Team of Modeling and Scientific Computing, Department of Mathematics and Computer, Multidisciplinary Faculty of Nador,
University Mohammed first, Morocco}
\email{nizarousbika@gmail.com}

\author[Z. El Allali]{Zakaria El Allali}	
\address{Team of Modeling and Scientific Computing, Department of Mathematics and Computer, Multidisciplinary Faculty of Nador,
University Mohammed first, Morocco}
\email{z.elallali@ump.ma}

\author[L. Kong]{Lingju Kong}
\address{Department of Mathematics,
University of Tennessee at Chattanooga, Chattanooga, TN 37403, USA.}
\email{Lingju-Kong@utc.edu}

\baselineskip 21pt

\begin{abstract}
Using the variational approach and the critical point theory, we established several criteria for the existence of at least one nontrivial solution for a discrete elliptic 
boundary value problem with a weight $p(\cdot, \cdot)$ and depending on a real parameter $\lambda$.
\end{abstract}

\keywords{Discrete boundary value problems, critical point theory, algebraic systems difference equations.}
\subjclass[2010]{39A10, 35J15}

\maketitle

\setcounter{equation}{0}
\section{Introduction}
\label{sec1}

In this paper, we consider  the discrete elliptic boundary value problem with a weight
\begin{equation}\label{eqn1.1}
	\begin{cases}
	
	-\Delta_{1}(p(i-1,j)\Delta_{1}u(i-1,j))-\Delta_{2}(p(i,j-1)\Delta_{2}u(i,j-1))=\lambda f((i,j),u(i,j)),\\
	\hspace*{3.56in} \forall (i,j)\in [1,m]_{\mathbb{Z}}\times[1,n]_{\mathbb{Z}},\\
		u(0,j)=u(m+1,j)=0, \quad  \forall j\in [1,n]_{\mathbb{Z}},\\
		u(i,0)=u(i,n+1)=0, \quad  \forall i\in [1,m]_{\mathbb{Z}},
    \end{cases}
\end{equation}
where $[1,m]_{\mathbb{Z}}=\{1,\ldots , m\},  [1,n]_{\mathbb{Z}}=\{1,\ldots , n\}$, $\Delta_{1}u(i,j)=u(i+1,j)-u(i,j) $ 
and $ \Delta_{2}u(i,j)=u(i,j+1)-u(i,j)$ are the forward difference operators, 
$f:[1,m]_{\mathbb{Z}}\times[1,n]_{\mathbb{Z}}\times\mathbb{R}\rightarrow \mathbb{R}$ is a continuous function  
subject to some suitable assumptions, $\lambda$ is a positive parameter, and $ p:[0,m]_{\mathbb{Z}}\times[0,n]_{\mathbb{Z}}\rightarrow (0,+\infty)$ 
is a given  function such that 
 \begin{equation}\label{eqn1.2}
 p(0,j)=0, \quad \forall j \in [1,n]_{\mathbb{Z}},  \quad \text{and} \quad  p(i,0)=0, \quad \forall i \in [1,m]_{\mathbb{Z}}. 
\end{equation} 
 
The problem (\ref{eqn1.1}) can be regarded as the discrete counterpart of the elliptic partial differential equation
\begin{equation*} 
 \begin{cases}
 \dfrac{\partial}{\partial x}\left( g(x,y)\dfrac{\partial u}{\partial x}\right)+\dfrac{\partial}{\partial y}
 \left( g(x,y)\dfrac{\partial u}{\partial y}\right)  +\lambda f((x,y),u(x,y))=0,  \quad \forall(x,y)\in \Omega,\\[0.2cm]
 u(x,y)=0,\quad \forall(x,y)\in \partial\Omega.
 \end{cases}
\end{equation*}
 
As is well known, the study of nonlinear algebraic systems arise in a large variety of applications such as in
reaction-diffusion equations, neural networks, compartmental systems, and population models. Nonlinear algebraic systems 
can be obtained from several Dirichlet problems of differential and difference equations, three point boundary value problems, 
and steady states of complex dynamical networks. We refer the reader to \cite{Gyori} and the references therein for more information. 
      
Discrete elliptic problems involving functions with two or more discrete variables appear frequently in applications and are investigated in the literature. 
Recently, several works studied the existence and multiplicity of solutions for such problems. 
See, for example, \cite{Gao, Mihailescu, Shapour}. The progress of modern digital computing devices contributes greatly to the increasing interest
in discrete problems. In fact, because these problems can be simulated in a simple way by means of these devices and the simulations often reveal
important information about the behavior of complex systems,  many recent studies related to image processing, population models, 
neural networks, social behaviors, and digital control systems, are described in terms of such functional relations as observed in \cite{Bai}.
We also mention the papers \cite{Molica, Bisci, Cheng, Feng} for some interesting contributions related to some existence 
results for nonlinear algebraic systems, as well as the monographs \cite{Agarwal, Kelly} as general references for discrete problems.
         
The variational techniques employed in the discrete problems are the same  techniques already known for continuous problems with the necessary modifications. 
In order to establish existence and multiplicity of solutions for discrete problems, several authors exploited various  
methods such as fixed point theorems, critical point theory, and Brouwer degree, see for example \cite{Perera, Bereanu, Gao, Henderson}.

In 2008, Yang and Ji \cite{Yang} studied the structure of the spectrum of the problem 
      \[
      \begin{cases}
      u(i,i)+u(j,j)+\lambda a(i,j)u(i,j)=0,\quad\forall(i,j)\in [1,m]_{\mathbb{Z}}\times[1,n]_{\mathbb{Z}},\\
      u(i,0)=u(i,n+1)=0 ,\quad \forall i \in [1,m]_{\mathbb{Z}},\\
      u(0,j)=u(m+1,j)=0 ,  \quad \forall j \in [1,n]_{\mathbb{Z}},
      \end{cases}
      \]
and they found the existence of a positive eigenvector corresponding to the smallest eigenvalue. In 2010, Galewski and Orpel \cite{Galewski}, 
using variational methods and some monotonicity results, considered the  problem (\ref{eqn1.1}) without a weight, i.e., the problem 
       \begin{equation}\label{eqn1.3}
       \begin{cases}
       \Delta_{1}(\Delta_{1}u(i-1,j))+\Delta_{2}(\Delta_{2}u(i,j-1))+\lambda f((i,j),u(i,j))=0,\\
       \hspace*{2.43in}\forall (i,j)\in [1,m]_{\mathbb{Z}}\times[1,n]_{\mathbb{Z}},\\
       u(i,0)=u(i,n+1)=0 ,\quad \forall i \in [1,m]_{\mathbb{Z}},\\
       u(0,j)=u(m+1,j)=0 , \quad \forall j \in [1,n]_{\mathbb{Z}},
       \end{cases}
  \end{equation}
and they established the existence of one solution.     
Other works on the problem (\ref{eqn1.3}) can be found in \cite{Repovs, Imbesi} where the authors, using variational methods and maximum principle,
proved the existence of infinitely many solutions and  determined unbounded intervals of parameters such that (\ref{eqn1.3}) admits an unbounded sequence of solutions.
        
In this paper, motivated by this large interest, we study the existence of at least one  nontrivial solution of the problem (\ref{eqn1.1})
under some conditions on the nonlinearity function $f$ and for suitable values of the parameter $\lambda$. The tools employed include the  
theory of variational methods, the mountain pass theorem, and linking arguments.
     
The rest of this paper is organized as follows. In Section \ref{sec2}, we present some preliminaries that will be used in Section \ref{sec4}. 
In Section \ref{sec3}, we introduce some corresponding variational framework and define some functionals for the transformation of the problem (\ref{eqn1.1}).
In the last section, we give the main results and their proofs.

\setcounter{equation}{0}
\section{Preliminaries}
\label{sec2}

In this section, we present some definitions and theorems that will be used in the sequel. We refer the reader to \cite{Ambroseti, Ricceri, Struwe, Zhang} for more details.

\begin{definition}\label{def1}
Let E be a real Banach space, D an open subset of E. Suppose that a functional $\varphi:D\rightarrow\mathbb{R}$ is Fr\'{e}chet differentiable on D. 
If $u_{0}\in D$ and the Fr\'{e}chet derivative of $\varphi$ satisfies $\varphi\prime(u_{0})=0$, then we say that $u_{0}$ is a critical point of $\varphi$ 
and $\varphi(u_{0})$ is a critical value of $\varphi$.
\end{definition}

Let $C^{1}(E,\mathbb{R})$ denote the set of functionals that are Fr\'{e}chet differentiable in $E$ and their Fr\'{e}chet derivatives are continuous in $E$.

\begin{definition}\label{def2}
	Let E be a real Banach space and $\varphi\in C^{1}(E,\mathbb{R})$. We say that $\varphi$ satisfies the Palais-Smale condition ((PS)
	condition for short) if for every sequence  $(u_{n})\in E$ such that $\varphi(u_{n})$ is bounded and $\varphi\prime(u_{n})\rightarrow 0$ 
	as $n\rightarrow\infty$, there exists a subsequence of  $(u_{n})$ which is convergent in E.
\end{definition}

\begin{theorem}\label{theo2.3}(\!\!\cite{Struwe})
	Let E be a real Banach space and $\varphi :E\rightarrow\mathbb{R}$ 
	 is weakly lower semi-continuous function and coercive, i.e., $\displaystyle\lim_{\lVert x \rVert\rightarrow+\infty}\varphi(x)= +\infty$, 
	 then there exists $x_{0}\in E$ such that 
	\[
	\inf _{x\in E}\varphi(x)=\varphi(x_{0}).
	\]
    Furthermore, if $\varphi\in C^{1}(E,\mathbb{R})$, then $x_{0}$ is also a critical point of $\varphi$, i.e., $\varphi\prime(x_{0})=0$.		
\end{theorem}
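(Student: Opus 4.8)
The plan is to invoke the direct method of the calculus of variations. Write $c := \inf_{x \in E}\varphi(x) \in [-\infty,+\infty)$ and choose a minimizing sequence $(x_{n}) \subset E$, that is, $\varphi(x_{n}) \to c$ as $n \to \infty$.

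The first step is to check that $(x_{n})$ is bounded in $E$. If it were not, then after passing to a subsequence $\lVert x_{n}\rVert \to +\infty$, and coercivity would force $\varphi(x_{n}) \to +\infty$, contradicting $\varphi(x_{n}) \to c < +\infty$. Hence $\sup_{n}\lVert x_{n}\rVert < +\infty$.

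The second step produces a candidate minimizer. Since $E$ is reflexive (in the present discrete setting it is in fact finite dimensional), the bounded sequence $(x_{n})$ has a subsequence, still denoted $(x_{n})$, with $x_{n} \rhp x_{0}$ for some $x_{0} \in E$. Applying the weak lower semicontinuity of $\varphi$ along this subsequence yields
\[
\varphi(x_{0}) \le \liminf_{n\to\infty}\varphi(x_{n}) = c,
\]
while $\varphi(x_{0}) \ge c$ holds by the definition of the infimum. Therefore $\varphi(x_{0}) = c$; in particular $c$ is finite and $\inf_{x\in E}\varphi(x) = \varphi(x_{0})$.

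For the last assertion, suppose in addition $\varphi \in C^{1}(E,\mathbb{R})$ and fix an arbitrary $v \in E$. The real function $g(t) := \varphi(x_{0}+tv)$ is differentiable on $\mathbb{R}$ with $g'(t) = \langle \varphi'(x_{0}+tv),v\rangle$, and since $x_{0}$ is a global minimizer of $\varphi$ it attains its minimum at $t = 0$; Fermat's rule then gives $0 = g'(0) = \langle \varphi'(x_{0}),v\rangle$. As $v$ is arbitrary, $\varphi'(x_{0}) = 0$, so $x_{0}$ is a critical point of $\varphi$ in the sense of Definition~\ref{def1}. The only genuinely delicate point in this argument is extracting a weakly convergent subsequence from the bounded minimizing sequence, which relies on reflexivity of $E$; this causes no difficulty here, since the space of functions on the finite grid $[1,m]_{\mathbb{Z}}\times[1,n]_{\mathbb{Z}}$ vanishing on the boundary is finite dimensional, so one may even extract a norm-convergent subsequence and only ordinary lower semicontinuity is required.
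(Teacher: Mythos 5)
Your argument is correct, and it is the standard direct-method proof: coercivity bounds a minimizing sequence, weak sequential compactness gives a weakly convergent subsequence, weak lower semicontinuity passes the infimum to the limit, and Fermat's rule along the lines $t\mapsto\varphi(x_{0}+tv)$ yields $\varphi^{\prime}(x_{0})=0$ in the $C^{1}$ case. Note, however, that the paper does not prove this statement at all; it is quoted as a known result with a citation to \cite{Struwe}, so there is no internal proof to compare against. The one point worth emphasizing is the one you yourself flagged: as stated in the paper, the hypothesis of reflexivity of $E$ is missing, and without it (or some substitute such as finite dimensionality) the extraction of a weakly convergent subsequence from a bounded minimizing sequence is not justified, so the theorem as literally stated for an arbitrary Banach space is not correct; Struwe's original formulation assumes $E$ reflexive. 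Your observation that in the application the space $E\cong\mathbb{R}^{mn}$ is finite dimensional, so one may even pass to a norm-convergent subsequence and only ordinary lower semicontinuity is needed, is exactly the right way to reconcile the cited statement with its use in Theorems \ref{theo4.2} and \ref{theo4.3}.
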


\begin{theorem}(Mountain Pass Lemma \cite{Ambroseti})\label{theo2.4}
	Let E be a real Banach space and $\varphi\in C^{1}(E,\mathbb{R})$  satisfying the
	(PS) condition with $\varphi(0)=0$. Suppose that
\begin{itemize}
\item[(i)] There exists $\rho>0$ and $\alpha>0$ such that $\varphi(u)\geq\alpha$ for all $u\in E$, with $\lVert u\rVert=\rho$.

\item[(ii)] There  exists $u_{0}\in E$ with $\lVert u\rVert\geq\rho$ such that $\varphi(u_{0})<0$.

\end{itemize}
Then $\varphi$ has a critical value $c\geq\alpha$  and $c=\displaystyle\inf_{h\in\Gamma}\displaystyle\max_{s\in[0,1]}\varphi(h(s))$, 
where
\[
 \Gamma=\{h\in C([0,1],E) : h(0)=0,h(1)=u_{0}\}. \]
 
\end{theorem}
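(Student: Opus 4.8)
The plan is to derive Theorem \ref{theo2.4} from the deformation lemma, following the classical scheme of Ambrosetti and Rabinowitz. Put $c = \inf_{h\in\Gamma}\max_{s\in[0,1]}\varphi(h(s))$ as in the statement; note $\Gamma\neq\emptyset$ since the segment $s\mapsto su_{0}$ lies in $\Gamma$, so $c$ is well defined. First I would verify $c\geq\alpha$. For any $h\in\Gamma$ the map $s\mapsto\lVert h(s)\rVert$ is continuous, vanishes at $s=0$, and equals $\lVert u_{0}\rVert$ at $s=1$; since (i) and (ii) together force $\lVert u_{0}\rVert>\rho$ (if $\lVert u_{0}\rVert=\rho$ then $\varphi(u_{0})\geq\alpha>0$, contradicting $\varphi(u_{0})<0$), the intermediate value theorem gives $s_{h}\in(0,1)$ with $\lVert h(s_{h})\rVert=\rho$, whence $\max_{s}\varphi(h(s))\geq\varphi(h(s_{h}))\geq\alpha$ by (i). Taking the infimum over $h\in\Gamma$ yields $c\geq\alpha>0$.

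Next I would argue by contradiction, assuming $c$ is not a critical value of $\varphi$. Because $\varphi\in C^{1}(E,\mathbb{R})$ satisfies the (PS) condition, the quantitative deformation lemma applies at level $c$: there are $\varepsilon\in(0,c)$ and a continuous map $\eta:[0,1]\times E\to E$ with $\eta(0,u)=u$ for all $u$, $\eta(1,\varphi^{c+\varepsilon})\subset\varphi^{c-\varepsilon}$ where $\varphi^{a}=\{u\in E:\varphi(u)\leq a\}$, and $\eta(t,u)=u$ whenever $\varphi(u)\notin(c-\varepsilon,c+\varepsilon)$. Shrinking $\varepsilon$ so that $c-\varepsilon>0$, we have $\varphi(0)=0\leq c-\varepsilon$ and $\varphi(u_{0})<0\leq c-\varepsilon$, so $\eta(t,\cdot)$ fixes both $0$ and $u_{0}$ for every $t$.

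Finally, by definition of $c$ as an infimum, I would choose $h\in\Gamma$ with $\max_{s\in[0,1]}\varphi(h(s))\leq c+\varepsilon$, i.e.\ $h([0,1])\subset\varphi^{c+\varepsilon}$, and set $\widetilde h(s)=\eta(1,h(s))$. Then $\widetilde h$ is continuous with $\widetilde h(0)=\eta(1,0)=0$ and $\widetilde h(1)=\eta(1,u_{0})=u_{0}$, so $\widetilde h\in\Gamma$; but $\max_{s}\varphi(\widetilde h(s))\leq c-\varepsilon<c$, contradicting the definition of $c$. Hence $c$ is a critical value, and $c\geq\alpha$ by the first step, which is the assertion.

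The genuine work — which I would either cite as a standard lemma or prove as a separate step — is the deformation lemma itself. Its proof requires constructing a locally Lipschitz pseudo-gradient vector field for $\varphi$ on the set of regular points, truncating it so that the induced flow is globally defined and stationary outside $\varphi^{-1}(c-\varepsilon,c+\varepsilon)$, and then using the (PS) condition to obtain a uniform positive lower bound for $\lVert\varphi'\rVert$ on the annular region $\{\,c-\varepsilon\leq\varphi\leq c+\varepsilon\,\}$ (there are no critical points there since $c$ is assumed noncritical and $\varepsilon$ is small), so that flowing for unit time pushes $\varphi^{c+\varepsilon}$ down into $\varphi^{c-\varepsilon}$. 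This lower bound, and hence the whole argument, is exactly where compactness via (PS) is indispensable, and it is the main obstacle in a self-contained proof.
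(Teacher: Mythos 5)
Your proposal is correct: the verification that $c\geq\alpha$ via the intermediate value theorem (using that (i) and (ii) force $\lVert u_{0}\rVert>\rho$) followed by the contradiction argument with the quantitative deformation lemma is precisely the classical Ambrosetti--Rabinowitz proof, and you rightly flag the deformation lemma (pseudo-gradient field plus the (PS) condition) as the place where the real work and the compactness enter. Note that the paper itself gives no proof of Theorem \ref{theo2.4} --- it is stated as a known tool and attributed to \cite{Ambroseti} --- so there is no internal argument to compare against; your route coincides with that of the cited source and is an acceptable justification of the statement as used here.
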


\begin{theorem}\label{theo2.5}(\!\!\cite{Ricceri})
	Let X be a reflexive real Banach space and let $\Phi,\Psi : X\rightarrow \mathbb{R} $ be two G\^{a}teaux differentiable functionals 
	such that $\Phi$ is strongly continuous, sequentially weakly lower semicontinuous and coercive in X and  $\Psi$ is sequentially weakly upper 
	semicontinuous in X. Let $J_{\lambda}$ be the functional defined as $J_{\lambda}:=\Phi - \lambda\Psi$, $\lambda\in \mathbb{R}$, and 
	for any $r>\displaystyle\inf_{X}\Phi$ let $\varphi$ be the function defined by 
	\[
	\varphi(r)=\displaystyle\inf_{u\in\Phi^{-1}((-\infty,r))}\dfrac{\displaystyle\sup_{v\in\Phi^{-1}((-\infty,r))}\Psi(v)-\Psi(u)}{r-\Phi(u)}.
	\] 
	 Then, for any $r>\displaystyle\inf_{X}\Phi $  and any $\lambda\in \left( 0, {1}/{\varphi(r)}\right)$, the restriction of 
	 the functional $J_{\lambda}$ to $\Phi^{-1}((-\infty,r))$ 
admits a global minimum, which is a critical point (precisely a local minimum) of $J_{\lambda}$ in X.

\end{theorem}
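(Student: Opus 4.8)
The plan is to reduce the statement to an application of the direct method together with the trivial fact that a local minimizer of a Gâteaux differentiable functional is a critical point. Write $N:=\Phi^{-1}((-\infty,r))$; this set is nonempty because $r>\inf_X\Phi$ and open because $\Phi$ is strongly continuous, hence continuous. I would prove that the restriction of $J_\lambda$ to $N$ attains a global minimum at some $u_0\in N$. Once this is done, since $N$ is an open neighborhood of $u_0$, the point $u_0$ is automatically a local minimum of $J_\lambda$ on all of $X$; and since $J_\lambda=\Phi-\lambda\Psi$ is Gâteaux differentiable, the scalar map $t\mapsto J_\lambda(u_0+tv)$ has an interior minimum at $t=0$ for every $v\in X$, so its derivative there, $\langle J_\lambda'(u_0),v\rangle$, vanishes; thus $u_0$ is a critical point of $J_\lambda$, precisely a local minimum, as claimed.

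First I would record a few preliminaries. Put $M:=\sup_{v\in N}\Psi(v)$. If $M=+\infty$ then $\varphi(r)=+\infty$, the interval $(0,1/\varphi(r))$ is empty, and there is nothing to prove, so I may assume $M<+\infty$. Coercivity of $\Phi$ (with $\Phi$ sequentially weakly lower semicontinuous on the reflexive space $X$, so that $\inf_X\Phi$ is finite by Theorem \ref{theo2.3}) makes $\Phi$ bounded below, and $\Psi\le M$ on $N$, so $J_\lambda=\Phi-\lambda\Psi$ is bounded below on $N$; set $a:=\inf_N J_\lambda$. Next, $0<\lambda<1/\varphi(r)$ is equivalent to $\varphi(r)<1/\lambda$, i.e. the infimum defining $\varphi(r)$ is $<1/\lambda$, so there exists $u_1\in N$ with $\lambda\bigl(M-\Psi(u_1)\bigr)<r-\Phi(u_1)$, which rearranges to
\[
a\le J_\lambda(u_1)=\Phi(u_1)-\lambda\Psi(u_1)<r-\lambda M .
\]

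The core step is to manufacture a minimizer of $J_\lambda|_N$ that lies in $N$ rather than on the boundary $\Phi^{-1}(\{r\})$. I would take a minimizing sequence $(w_n)\subset N$ with $J_\lambda(w_n)\to a$. From $\Psi(w_n)\le M$ I get $\Phi(w_n)=J_\lambda(w_n)+\lambda\Psi(w_n)\le J_\lambda(w_n)+\lambda M$, and since $J_\lambda(w_n)\to a<r-\lambda M$, there is $\delta>0$ with $\Phi(w_n)\le r-\delta$ for all large $n$. By coercivity of $\Phi$ the sequence $(w_n)$ is bounded, and reflexivity of $X$ yields a subsequence with $w_n\rightharpoonup u_0$. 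Sequential weak lower semicontinuity of $\Phi$ then forces $\Phi(u_0)\le\liminf_n\Phi(w_n)\le r-\delta<r$, so $u_0\in N$. To finish, sequential weak upper semicontinuity of $\Psi$ gives $-\lambda\Psi(u_0)\le\liminf_n(-\lambda\Psi(w_n))$, and combining with weak lower semicontinuity of $\Phi$,
\[
J_\lambda(u_0)\le\liminf_n\Phi(w_n)+\liminf_n(-\lambda\Psi(w_n))\le\liminf_n J_\lambda(w_n)=a ;
\]
since $u_0\in N$ we also have $J_\lambda(u_0)\ge a$, hence $J_\lambda(u_0)=a$, so $u_0$ is a global minimum of $J_\lambda|_N$, and by the first paragraph the proof is complete.

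The main obstacle I anticipate is precisely the one handled in the third paragraph: because $N$ is \emph{open}, a priori a minimizing sequence for $J_\lambda$ on $N$ could converge weakly to a point at which $\Phi$ equals $r$, and the \emph{only} leverage against this is the gap $a<r-\lambda M$ distilled from the hypothesis $\lambda\varphi(r)<1$; everything else is the standard weak‑compactness argument underlying Theorem \ref{theo2.3}. A secondary point to dispatch explicitly is the degenerate case $M=+\infty$ (which makes the parameter interval vacuous) and, conversely, the observation that $\sup_N\Psi<+\infty$ keeps $\varphi(r)$ finite, so the interval $(0,1/\varphi(r))$ is genuinely nonempty whenever the hypotheses are not vacuous.
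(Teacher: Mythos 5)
The paper offers no proof of Theorem \ref{theo2.5}: it is quoted as a known result of Ricceri \cite{Ricceri}, so there is no internal argument to compare yours against; I can only judge your proof on its own, and it is correct and complete. Your route is the standard direct-method proof of Ricceri's principle: the hypothesis $\lambda<1/\varphi(r)$ is used exactly where it must be, to extract $u_{1}\in N=\Phi^{-1}((-\infty,r))$ with $\inf_{N}J_{\lambda}\le J_{\lambda}(u_{1})<r-\lambda\sup_{N}\Psi$, and this gap is what forces the weak limit $u_{0}$ of a minimizing sequence to satisfy $\Phi(u_{0})\le r-\delta<r$, i.e.\ to stay in the open sublevel set rather than drift to the level $\Phi=r$; coercivity plus reflexivity give the weakly convergent subsequence, weak lower semicontinuity of $\Phi$ and weak upper semicontinuity of $\Psi$ give $J_{\lambda}(u_{0})=\inf_{N}J_{\lambda}$, and openness of $N$ (strong continuity of $\Phi$) plus G\^{a}teaux differentiability turn the minimizer into a critical point. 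A frequently seen variant of this argument minimizes $J_{\lambda}$ over the sequentially weakly closed set $\Phi^{-1}((-\infty,r])$ and then invokes the same inequality to exclude a minimizer on the boundary $\Phi=r$; your minimizing-sequence formulation accomplishes the same thing while avoiding any discussion of weak closedness of sublevel sets. Two small points are worth making explicit but are immediate: $\varphi(r)\ge 0$ (needed for the equivalence $\lambda<1/\varphi(r)\Leftrightarrow\varphi(r)<1/\lambda$), and the convention $1/(+\infty)=0$ in the degenerate case $\sup_{N}\Psi=+\infty$, which you correctly dispose of as vacuous.
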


\setcounter{equation}{0}
\section{Variational framework}
\label{sec3}
	
In this section, we introduce the corresponding variational framework for the problem (\ref{eqn1.1}). 
Let $E$ be the $mn$ dimensional space $\mathbb{R}^{m}\times\mathbb{R}^{n}$ endowed by the norm
\[
\lVert u\rVert=\left( \displaystyle\sum_{i=1}^{m}\displaystyle\sum_{j=1}^{n}u^{2}(i,j)\right) ^{\frac{1}{2}}.
\]

For all $(i,j)\in[1,m]_{\mathbb{Z}}\times[1,n]_{\mathbb{Z}}$, the problem (\ref{eqn1.1}) can be rewritten as follows 
	\begin{equation}\label{eqn3.1}
	\begin{gathered}
	-p(i-1,j)u(i-1,j)+(p(i-1,j)+2p(i,j)+p(i,j-1))u(i,j)-p(i,j)u(i+1,j)\\
	\hspace*{1.56in}-p(i,j-1)u(i,j-1)- p(i,j)u(i,j+1)=\lambda f((i,j),u(i,j)),
	\end{gathered}
	\end{equation}
with the same boundary conditions as for the problem (\ref{eqn1.1}).
	
For $j\in [1,n]_{\mathbb{Z}}$, we let
\begin{equation*}
U_{j}=(u(1,j),u(2,j),\ldots,u(m,j))^{T} \quad  \text{and} \quad	U=(U_{1},U_{2}, \ldots ,U_{n})^{T},
\end{equation*} 
and for $U\in E$, we define
	\begin{eqnarray*}
		\textbf{H}(U) & = & (f((1,1),u(1,1)),f((2,1),u(2,1)),\ldots,f((m,1),u(m,1)),\\ 
		&& f((1,2),u(1,2)),\ldots,f((m,2),u(m,2)),\ldots,\\ 
		&& f((1,n),u(1,n)),\ldots,f((m,n)u(m,n)))^{T}.
	\end{eqnarray*}
Then, the problem (\ref{eqn1.1}) can be formulated as the nonlinear algebraic system
		\begin{equation}\label{eqn3.3}
		\textbf{\textit{M}}U=\lambda\textbf{\textit{H}}(U),
		\end{equation}
where $\textbf{\textit{M}}$ is an $mn\times mn$ matrix given by 
\begin{equation}\label{eqn3.4}
	\begin{pmatrix}
	L_{1}&-P_{1} & 0& 0&\dots& 0& 0& 0& 0\\
	-P_{1}& L_{2}&-P_{2}& 0&\dots& 0& 0& 0& 0\\
	0&-P_{2}& L_{3} &-P_{3}&\dots& 0& 0& 0& 0\\
	0& 0& -P_{3}& L_{4}&\dots& 0& 0& 0& 0\\
	\dots&\dots&\dots&\dots&\dots&\dots&\dots&\dots&\dots\\
	\dots&\dots&\dots&\dots&\dots&\dots&\dots&\dots&\dots\\
	0& 0& 0& 0&\dots& L_{n-3}&-P_{n-3} & 0& 0\\
	0& 0& 0& 0&\dots&-P_{n-3}& L_{n-2}&-P_{n-2}& 0\\
	0& 0& 0& 0&\dots& 0&-P_{n-2}& L_{n-1}&-P_{n-1}\\
	0& 0& 0& 0&\dots& 0& 0&-P_{n-1}& L_{n}	
	\end{pmatrix},
	\end{equation} 
with, for all $j\in [1,n]_{\mathbb{Z}}$, $L_{j}=(l^{j}_{kl})_{m\times m}$ being an $m\times m$ symmetric tridiagonal matrix defined by
\begin{equation}\label{eqn3.5}
l^{j}_{kl} = \left\{
 \begin{array}{ll}
p(k-1,j)+2p(k,j)+p(k,j-1) & \textrm{if $k=l$},\\
l^{j}_{k,k-1}=-p(k,j)=l^{j}_{k,k+1},\\
0 & \textrm{elsewhere},
\end{array}
 \right.
\end{equation}
and, for all $j\in [1,n-1]_{\mathbb{Z}}$, $ P_{j} $ being an  $ m\times m $ diagonal matrix given by 
\begin{equation}\label{eqn3.6}
	P_{j}=	
	\begin{pmatrix}
		p(1,j)& 0 & \dots&\dots&\dots& \dots&\dots& \dots\\
		0 & p(2,j)& 0 & \dots&\dots&\dots& \dots&\dots& \\
		\dots& 0 & p(3,j)& 0 &\dots&\dots&\dots&\dots\\
		\dots&\dots& 0&\dots&\dots&\dots&\dots&\dots\\
		\dots&\dots&\dots&\dots&\dots& \dots & 0&\dots\\
		\dots& \dots& \dots& \dots& \dots& 0 & p(m-1,j)& 0\\
		\dots& \dots& \dots& \dots& \dots& \dots& 0 & p(m,j)	
	\end{pmatrix}.
\end{equation}	
	
For all $\lambda>0$, we let $I_{\lambda}:E \rightarrow \mathbb{R}$ be the functional defined by
\begin{equation}\label{eqn3.7}
I_{\lambda}(U)=\dfrac{1}{2}U^{T}\textbf{\textit{M}}U- \lambda\displaystyle\sum_{i=1}^{m}\displaystyle\sum_{j=1}^{n}F((i,j),u(i,j)),
\end{equation}
where 
\begin{equation}\label{eqn3.8 }
F((i,j),x)=\int_{0}^{x}f((i,j),t)dt.
\end{equation}		

For $U\in E$, we define two reals functionals $\phi$ and $\psi$ by 
\begin{equation}\label{eqn3.9}
\phi(U)=\dfrac{1}{2}U^{T}\textbf{\textit{M}}U,
\end{equation}
and  
\begin{equation}\label{eqn3.10}
\psi(U)=\displaystyle\sum_{i=1}^{m}\displaystyle\sum_{j=1}^{n}F((i,j),u(i,j)).
\end{equation}	
Then, the functional $I_{\lambda}$ can be rewritten as follows 
\begin{equation}\label{eqn3.11}
I_{\lambda}(U)=\phi(U)-\lambda\psi(U),  \qquad \forall U\in E.
\end{equation}	
Standard argument assures that, with any fixed $\lambda>0$,  the functional $ I_{\lambda}$ is G\^{a}teaux differentiable with  
\begin{equation}\label{eqn3.12}
I^{'}_{\lambda}(U)=\textbf{\textit{M}}U- \lambda\textbf{\textit{H}}(U) ,\quad \forall U\in E.
\end{equation}
It is clear that $U$ is a solution of (\ref{eqn1.1}), if and only if $U$  is a critical point of the functional $I_{\lambda}$.
Thus, the search of solutions of the problem (\ref{eqn1.1}) reduces to finding the critical points $U\in E$  of the functional $I_{\lambda}$.

Now, we prove the following lemma.

\begin{lemma}\label{lem3.1}
M  is a positive definite matrix.     
\end{lemma}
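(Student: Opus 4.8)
The plan is to identify the quadratic form attached to $\textbf{\textit{M}}$ with a weighted discrete Dirichlet energy. Concretely, I will prove the identity
\[
U^{T}\textbf{\textit{M}}U=\sum_{j=1}^{n}\sum_{i=1}^{m}p(i,j)\bigl(\Delta_{1}u(i,j)\bigr)^{2}+\sum_{i=1}^{m}\sum_{j=1}^{n}p(i,j)\bigl(\Delta_{2}u(i,j)\bigr)^{2},\qquad \forall U\in E,
\]
where $u$ is extended by the homogeneous boundary values $u(0,j)=u(m+1,j)=0$ and $u(i,0)=u(i,n+1)=0$. Once this is available, both nonnegativity and definiteness of $\textbf{\textit{M}}$ follow quickly. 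Note also that $\textbf{\textit{M}}$ is symmetric, as is immediate from the block/tridiagonal description (\ref{eqn3.4})--(\ref{eqn3.6}).

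To establish the identity I would start from $U^{T}\textbf{\textit{M}}U=\sum_{i=1}^{m}\sum_{j=1}^{n}u(i,j)\,(\textbf{\textit{M}}U)(i,j)$ and use that, by the left-hand side of (\ref{eqn1.1}) (equivalently, by (\ref{eqn3.1})), $(\textbf{\textit{M}}U)(i,j)=-\Delta_{1}(p(i-1,j)\Delta_{1}u(i-1,j))-\Delta_{2}(p(i,j-1)\Delta_{2}u(i,j-1))$. Splitting this into its two directional pieces and applying summation by parts (Abel's transformation) in the variable $i$ for the first piece and in $j$ for the second, the boundary contributions arising at $i=0$ and $j=0$ drop out thanks to (\ref{eqn1.2}), while those arising at $i=m+1$ and $j=n+1$ are absorbed using the Dirichlet conditions $u(m+1,j)=0$ and $u(i,n+1)=0$; what remains is exactly the sum of squares above. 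Alternatively, the same identity can be read off directly from (\ref{eqn3.4})--(\ref{eqn3.6}) by regrouping the products $\tfrac12U^{T}\textbf{\textit{M}}U$ into complete squares. This bookkeeping is the only substantial step, and it is where one must be careful with the index ranges and the boundary terms; it is otherwise entirely mechanical.

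Granting the identity, since $p$ is strictly positive on $[1,m]_{\mathbb{Z}}\times[1,n]_{\mathbb{Z}}$ the right-hand side is a nonnegative quadratic form, so $U^{T}\textbf{\textit{M}}U\ge 0$ for every $U\in E$. For definiteness, suppose $U^{T}\textbf{\textit{M}}U=0$; then in particular the first double sum vanishes, hence for each fixed $j\in[1,n]_{\mathbb{Z}}$ and each $i\in[1,m]_{\mathbb{Z}}$ we must have $\Delta_{1}u(i,j)=0$ because $p(i,j)>0$. This gives $u(1,j)=u(2,j)=\cdots=u(m,j)=u(m+1,j)$, and since $u(m+1,j)=0$ all these values are $0$; as $j$ was arbitrary, $U=0$. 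Therefore $\textbf{\textit{M}}$ is positive definite. The main obstacle, as noted, is purely computational: carrying out the two summations by parts and verifying that every boundary term at $i=0,m+1$ and $j=0,n+1$ is killed by (\ref{eqn1.2}) or by the homogeneous boundary conditions, so that the energy is a clean weighted sum of squares.
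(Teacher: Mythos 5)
Your proof is correct and follows essentially the same route as the paper: both compute the quadratic form $U^{T}\textbf{\textit{M}}U$ by completing squares (equivalently, summation by parts) and use $p(0,j)=p(i,0)=0$ together with the zero Dirichlet extension to dispose of the boundary terms. The only real difference is cosmetic: you keep the exact two-directional energy identity and deduce definiteness from the vanishing of the $\Delta_{1}$-differences via $u(m+1,j)=0$, whereas the paper drops the $\Delta_{1}$-squares to obtain the lower bound (\ref{eqn3.14}) and concludes from the $\Delta_{2}$-differences together with the terms $p(i,n)x_{i,n}^{2}$.
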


\begin{proof}
For $j\in [1,n]_{\mathbb{Z}} $, we let $ X_{j}^{T}=(x_{1,j}, x_{2,j}, x_{3,j},...,x_{m,j}) \in\mathbb{R}^{m}$. 
For each $j\in [1,n]_{\mathbb{Z}} $, $L_{j}$ is a real symmetric matrix, then 
\begin{eqnarray*}
 X_{j}^{T}L_{j}X_{j} & = & \displaystyle\sum_{i=1}^{m}(p(i-1,j)+2p(i,j) +p(i,j-1) )x_{i,j}^{2} - 2\displaystyle\sum_{i=1}^{m-1}p(i,j)x_{i,j}x_{i+1,j}\\
  & & =\displaystyle\sum_{i=1}^{m}p(i-1,j)x_{i,j}^{2} +2\displaystyle\sum_{i=1}^{m}p(i,j)x_{i,j}^{2} + \displaystyle\sum_{i=1}^{m}p(i,j-1)x_{i,j}^{2} \\
  & & -2\displaystyle\sum_{i=1}^{m-1}p(i,j)x_{i,j}x_{i+1,j}\\
  & &=\displaystyle\sum_{i=0}^{m-1}p(i,j)x_{i+1,j}-2\displaystyle\sum_{i=1}^{m-1}p(i,j)x_{i,j}x_{i+1,j} + \displaystyle\sum_{i=1}^{m}p(i,j)x_{i,j}^{2}\\
  & & + \displaystyle\sum_{i=1}^{m}p(i,j)x_{i,j}^{2} + \displaystyle\sum_{i=1}^{m}p(i,j-1)x_{i,j}^{2}\\
  & &  =\displaystyle\sum_{i=1}^{m-1}p(i,j)(x_{i+1,j}-x_{i,j})^{2} + p(0,j)x_{1,j}^{2}+ p(m,j)x_{m,j}^{2} + \displaystyle\sum_{i=1}^{m}p(i,j)x_{i,j}^{2}\\
  & & + \displaystyle\sum_{i=1}^{m}p(i,j-1)x_{i,j}^{2}.
\end{eqnarray*}
Thus,
\begin{equation}\label{eqn3.13}
X_{j}^{T}L_{j}X_{j}\geq \displaystyle\sum_{i=1}^{m}(p(i,j) + p(i,j-1))x_{i,j}^{2}. 
\end{equation}
On the other hand, for any $X=(X_{1},X_{2}, ...,X_{n})\in\mathbb{R}^{mn} $, we have
\[
X^{T}\textbf{\textit{M}}X=\displaystyle\sum_{j=1}^{n}X_{j}^{T}L_{j}X_{j} -2\displaystyle\sum_{j=1}^{n-1}\displaystyle\sum_{i=1}^{m}p(i,j)x_{i,j}x_{i,j+1}.
\]
In view of (\ref{eqn3.13}), we deduce that
\begin{eqnarray*}
X^{T}\textbf{\textit{M}}X & \geq & \displaystyle\sum_{j=1}^{n}\displaystyle\sum_{i=1}^{m}(p(i,j) + p(i,j-1))x_{i,j}^{2} - 2\displaystyle\sum_{j=1}^{n-1}\displaystyle\sum_{i=1}^{m}p(i,j)x_{i,j}x_{i,j+1}\\
& & \geq\displaystyle\sum_{j=1}^{n}\displaystyle\sum_{i=1}^{m}p(i,j)x_{i,j}^{2}+ \displaystyle\sum_{j=1}^{n}\displaystyle\sum_{i=1}^{m}p(i,j-1)x_{i,j}^{2} \\
& & - 2\displaystyle\sum_{j=1}^{n-1}\displaystyle\sum_{i=1}^{m}p(i,j)x_{i,j}x_{i,j+1}\\
& & \geq\displaystyle\sum_{j=1}^{n-1}\displaystyle\sum_{i=1}^{m}p(i,j)x_{i,j}^{2}+\displaystyle\sum_{i=1}^{m}p(i,n)x_{i,n}^{2}+ \displaystyle\sum_{j=0}^{n-1}\displaystyle\sum_{i=1}^{m}p(i,j)x_{i,j+1}^{2} \\
& & - 2\displaystyle\sum_{j=1}^{n-1}\displaystyle\sum_{i=1}^{m}p(i,j)x_{i,j}x_{i,j+1}\\
& &\geq\displaystyle\sum_{j=1}^{n-1}\displaystyle\sum_{i=1}^{m}p(i,j)(x_{i,j}^{2} + x_{i,j+1}^{2}-2x_{i,j}x_{i,j+1})+ \displaystyle\sum_{i=1}^{m}p(i,n)x_{i,n}^{2}\\
& & + \displaystyle\sum_{i=1}^{m}p(i,0)x_{i,1}^{2}.
\end{eqnarray*}
Then, taking into account that $ p(i,0)=0$ for all $i \in [1,m]_{\mathbb{Z}}$, we obtain that 
\begin{equation}\label{eqn3.14}
X^{T}\textbf{\textit{M}}X \geq\displaystyle\sum_{j=1}^{n-1}\displaystyle\sum_{i=1}^{m}p(i,j)(x_{i,j} - x_{i,j+1})^{2} +\sum_{i=1}^{m}p(i,n)x_{i,n}^{2} .
\end{equation}
Therefore, for any $X\in\mathbb{R}^{mn}$, we get that $X^{T}\textbf{\textit{M}}X \geq 0$, and if $X^{T}\textbf{\textit{M}}X =0$, 
the inequality (\ref{eqn3.14}) indicates that
$X_{j}=X_{j+1}$ for all $j \in [1,n-1]_{\mathbb{Z}}$ and $X_{n}=0$, so $X=0_{E}$.
Hence, we deduce  that $X^{T}\textbf{\textit{M}}X>0$ for all $X\in\mathbb{R}^{mn}$ with $X\neq0_{E}$, so $\textbf{\textit{M}}$ is a positive definite matrix.
\end{proof}
 
We let,  $\lambda_{1}$, $\lambda_{2}$, $\lambda_{3}$, $\ldots$,  and  $\lambda_{mn}$ be the eigenvalues of the positive definite matrix \textbf{\textit{M}} 
ordered as follows
\[
0<\lambda_{1}\leq\lambda_{2}\leq\ldots\leq\lambda_{mn}.
\]
It is  easy to show that, for every $U\in E$, we have 
\begin{equation}\label{eqn3.15}
\dfrac{1}{2}\lambda_{1}\lVert U\rVert^{2}\leq \phi(U)\leq\dfrac{1}{2}\lambda_{mn}\lVert U\rVert^{2},
\end{equation}
and 
\begin{equation}\label{eqn3.16}
\lVert U\rVert_{\infty}^{2}\leq \dfrac{2}{\lambda_{1}}\phi(U),
\end{equation}
where $\lVert U\rVert_{\infty}=\max\{\lvert u(i,j)\rvert$ , $(i,j)\in[1,m]_{\mathbb{Z}}\times[1,n]_{\mathbb{Z}}\}$.

\setcounter{equation}{0}
\section{Existence  results and their proofs}
\label{sec4}

In this section, we use the variational techniques mentioned in Section \ref{sec2} to show the existence of solutions of the 
problem (\ref{eqn1.1}).  

\begin{theorem}\label{theo4.1}
Assume that the following condition holds:
\begin{itemize}	
 \item[(H{1})] $\displaystyle\lim_{t\rightarrow 0}\dfrac{F((i,j),t)}{t^{2}}=+\infty$, $\forall (i,j)\in [1,m]_{\mathbb{Z}}\times[1,n]_{\mathbb{Z}}$.
\end{itemize}
Then there exists $\lambda^{\star}>0$ such that, for each  $\lambda\in (0,\lambda^{\star})$, the  problem (\ref{eqn1.1}) has at least one nontrivial solution. 
\end{theorem}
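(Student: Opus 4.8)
The plan is to recast \eqref{eqn1.1} as the problem of finding critical points of the functional $I_{\lambda}=\phi-\lambda\psi$ (cf. \eqref{eqn3.9}--\eqref{eqn3.11}), to obtain such a critical point from the local minimum principle of Theorem \ref{theo2.5}, and then to invoke (H1) to guarantee that the critical point produced is nontrivial. Concretely, I would apply Theorem \ref{theo2.5} with $\Phi:=\phi$ and $\Psi:=\psi$. The structural hypotheses are immediate: $E$ is finite dimensional, so weak and strong convergence coincide; by Lemma \ref{lem3.1} and \eqref{eqn3.15} the quadratic functional $\phi$ is (strongly) continuous, sequentially weakly lower semicontinuous and coercive, with $\inf_{E}\phi=\phi(0_{E})=0$; since $f$ is continuous, $F((i,j),\cdot)$ is of class $C^{1}$, so $\psi$ is continuous, hence sequentially weakly upper semicontinuous; and both $\phi$ and $\psi$ are G\^{a}teaux differentiable. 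Thus $I_{\lambda}$ is exactly the functional $J_{\lambda}$ of Theorem \ref{theo2.5}, and no Palais--Smale condition is needed here.

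Next I would fix any $r>0$ (admissible since $r>\inf_{E}\phi=0$) and bound $\varphi(r)$. By \eqref{eqn3.15}, $\phi^{-1}((-\infty,r))\subseteq\{U\in E:\lVert U\rVert^{2}<2r/\lambda_{1}\}$, which is bounded with compact closure; hence $\sup\{\psi(U):U\in\phi^{-1}((-\infty,r))\}$ is finite, and testing the infimum defining $\varphi(r)$ with $u=0_{E}$ yields $\varphi(r)\le\tfrac{1}{r}\sup\{\psi(U):U\in\phi^{-1}((-\infty,r))\}<+\infty$. Therefore $\lambda^{\star}:=1/\varphi(r)$, with the convention $\lambda^{\star}=+\infty$ when $\varphi(r)=0$, is strictly positive, and for every $\lambda\in(0,\lambda^{\star})$ Theorem \ref{theo2.5} provides a global minimizer $\overline{U}_{\lambda}$ of the restriction of $I_{\lambda}$ to $\phi^{-1}((-\infty,r))$, which is a critical point of $I_{\lambda}$ on $E$, and hence a solution of \eqref{eqn1.1}.

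It remains to show that $\overline{U}_{\lambda}\ne 0_{E}$; since $I_{\lambda}(0_{E})=0$, it suffices to exhibit some $U\in\phi^{-1}((-\infty,r))$ with $I_{\lambda}(U)<0$, and this is the step where (H1) is genuinely used. Fix $V\in E\setminus\{0_{E}\}$, and for the given $\lambda$ apply (H1) with the constant $M_{0}:=\lambda_{mn}/\lambda$: since there are only finitely many pairs $(i,j)$, there is $\delta>0$ such that $F((i,j),t)\ge M_{0}t^{2}$ for all $(i,j)$ whenever $|t|\le\delta$. For $s>0$ so small that $s\lVert V\rVert_{\infty}\le\delta$ and $\phi(sV)=s^{2}\phi(V)<r$, one has $\psi(sV)=\sum_{i=1}^{m}\sum_{j=1}^{n}F((i,j),sv(i,j))\ge M_{0}s^{2}\lVert V\rVert^{2}$ by the definition of $\lVert\cdot\rVert$, while $\phi(sV)\le\tfrac{1}{2}\lambda_{mn}s^{2}\lVert V\rVert^{2}$ by \eqref{eqn3.15}; hence $I_{\lambda}(sV)\le\bigl(\tfrac{1}{2}\lambda_{mn}-\lambda M_{0}\bigr)s^{2}\lVert V\rVert^{2}=-\tfrac{1}{2}\lambda_{mn}s^{2}\lVert V\rVert^{2}<0$. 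Consequently $\inf_{\phi^{-1}((-\infty,r))}I_{\lambda}<0=I_{\lambda}(0_{E})$, so $\overline{U}_{\lambda}\ne 0_{E}$ is a nontrivial critical point of $I_{\lambda}$, i.e. a nontrivial solution of \eqref{eqn1.1}.

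I expect the nontriviality step to be the main obstacle: (H1) must dominate the quadratic term $\phi$ for every $\lambda$ in the target interval, which forces the comparison constant $M_{0}$ to depend on $\lambda$ and then the admissible scale $s$ to depend in turn on $M_{0}$. The remaining ingredients---coercivity and boundedness of the sublevel sets of $\phi$ coming from the positive definiteness of $\textbf{\textit{M}}$ (Lemma \ref{lem3.1}), and continuity of $\psi$---are exactly what keeps $\lambda^{\star}$ strictly positive and makes Theorem \ref{theo2.5} applicable; everything else is routine finite-dimensional bookkeeping.
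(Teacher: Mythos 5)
Your proposal is correct and follows essentially the same route as the paper: Ricceri's principle (Theorem \ref{theo2.5}) applied to $\phi$ and $\psi$ on a sublevel set $\phi^{-1}((-\infty,r))$, followed by a small-scale test-vector argument using (H1) to show the restricted minimum value is negative, hence the minimizer is nontrivial. The only cosmetic differences are that the paper fixes $r=\tfrac{\lambda_{1}}{2}\alpha^{2}$ to obtain an explicit $\lambda^{\star}$ and verifies nontriviality via a sequence $w_{p}=u_{p}V\rightarrow 0_{E}$ together with a (vacuous under (H1)) case distinction on $f((i,j),0)$, whereas you keep $\lambda^{\star}=1/\varphi(r)$ abstract and use a single scaled vector with a $\lambda$-dependent comparison constant.
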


\begin{proof}
We will use the version of Ricceri's variational principle given in Theorem \ref{theo2.5}. 
Firstly, the functionals $\phi$  and $\psi$ defined in  (\ref{eqn3.9}) and (\ref{eqn3.10}) are G\^{a}teaux differentiable, 
and since $E$ is a finite dimensional space, they  satisfy all regularity assumptions of Theorem \ref{theo2.5}. 
The inequality (\ref{eqn3.15}) yields that  $\phi$ is coercive.

Secondly, let $\alpha>0$ and put $r=\dfrac{\lambda_{1}}{2}\alpha^{2}$, then for all $U\in E$ such that $\phi(U)<r$, taking (\ref{eqn3.16}) into account,  
we get that  $\lVert U\rVert_{\infty}<\alpha$.

For all $U\in E$ such that $\phi(U)<r$, by  (\ref{eqn3.10}), we have 
   \[
   \psi(U)\leq \displaystyle\sum_{i=1}^{m}\displaystyle\sum_{j=1}^{n}\displaystyle\max_{\lvert t\rvert\leq\alpha}F((i,j),t),
   \]  
which yields that
\begin{equation}\label{eqn4.1}
\displaystyle\sup_{\phi(U)<r}\psi(U)\leq \displaystyle\sum_{i=1}^{m}\displaystyle\sum_{j=1}^{n}\displaystyle\max_{\lvert t\rvert\leq\alpha}F((i,j),t).
\end{equation}
On the other hand, we let 
\begin{equation}\label{eqn4.2}
\lambda^{\star}= \dfrac{\lambda_{1}\alpha^{2}}{2\displaystyle\sum_{i=1}^{m}\displaystyle\sum_{j=1}^{n}\displaystyle\max_{\lvert t\rvert\leq\alpha}F((i,j),t)}>0
\end{equation}
and 
\begin{equation}\label{eqn4.3}
	\varphi(r):=\displaystyle\inf_{u\in\phi^{-1}((-\infty,r))}\dfrac{\displaystyle\sup_{v\in\phi^{-1}((-\infty,r))}\psi(v)-\psi(u)}{r-\phi(u)}.
\end{equation}
One has
\[ \varphi(r)\leq\dfrac{\displaystyle\sup_{v\in\phi^{-1}((-\infty,r))}\psi(v)-\psi(u)}{r-\phi(u)}\leq\dfrac{\displaystyle\sup_{v\in\phi^{-1}((-\infty,r))}\psi(v)}{r},
\]
then using (\ref{eqn4.1}), we have 
\[
\varphi(r)\leq\dfrac{1}{r}\displaystyle\sum_{i=1}^{m}\displaystyle\sum_{j=1}^{n}\displaystyle\max_{\lvert t\rvert\leq\alpha}F((i,j),t),
\]
therefore, 
\[
\lambda^{\star}\leq\dfrac{1}{\varphi(r)}.
\]
By Theorem \ref{theo2.5}, we see that, for every $\lambda\in (0,\lambda^{\star})$, the functional $I_{\lambda}$  admits at least one critical point  $U_{\lambda}\in\phi^{-1}((-\infty,r))$.

Next, it remains to show that $U_{\lambda}\neq 0_{E}$, if $f((i,j),0)\neq 0$ for some $(i,j)\in [1,m]_{\mathbb{Z}}\times[1,n]_{\mathbb{Z}}$. 
Since the trivial vector $0_{E}$ does  not solve problem (\ref{eqn1.1}),  $U_{\lambda}\neq 0_{E}$.
 
For the other case when $f((i,j),0)=0$ for every $(i,j)\in [1,m]_{\mathbb{Z}}\times[1,n]_{\mathbb{Z}}$, by the condition (H{1}), 
we can fix a sequence $ \{ u_{p}\}\subset\mathbb{R}^{+}$ converging to zero. Then, one has 
\[
\displaystyle\lim_{p\rightarrow +\infty}\dfrac{F((i,j),u_{p})}{u_{p}^{2}}=+\infty,   \quad    \forall (i,j)\in [1,m]_{\mathbb{Z}}\times[1,n]_{\mathbb{Z}},
\] 
and for a fixed constant $a>0$, there exists $\rho>0$ such that, $F((i,j),t)>at^{2}$  for all $(i,j)\in [1,m]_{\mathbb{Z}}\times[1,n]_{\mathbb{Z}}$ and $\lvert t\rvert\leq\rho$.
Let $V\in E$ with $v(i,j)=1$ for all $(i,j)\in [1,m]_{\mathbb{Z}}\times[1,n]_{\mathbb{Z}}$, and set $w_{p}=u_{p}V$ for any $p \in \mathbb{N}$. It is clear that $w_{p}\in E$ and 
$\lVert w_{p}\rVert =\lvert u_{p}\rvert  \lVert V\rVert\rightarrow 0$ as $p\rightarrow+\infty$.
Then, for $p$ large enough, we have $ \lVert w_{p}\rVert<\sqrt{\dfrac{\lambda_{1}}{\lambda_{mn}}}\alpha$,  furthermore $\phi(w_{p})<r$, so $w_{p}\in\phi^{-1}((-\infty,r)$.
Therefore, 
\[
\dfrac{\psi(w_{p})}{\phi(w_{p})}=\dfrac{\displaystyle\sum_{i=1}^{m}\displaystyle\sum_{j=1}^{n}F((i,j),u_{p}v(i,j))}{u_{p}^{2}\phi(V)}\geq 
\dfrac{a u_{p}^{2}\displaystyle\sum_{i=1}^{m}\displaystyle\sum_{j=1}^{n}v(i,j)^{2}}{u_{p}^{2}\phi(V)}=\dfrac{amn}{\phi(V)},
\]
for $p$ sufficiently large.

Let $A>0$ arbitrary large enough, and choose $a$ such that $A<\dfrac{amn}{\phi(V)}$, then for  $p$  large enough, one has 
\[
\dfrac{\psi(w_{p})}{\phi(w_{p})}>A.
\]
Then, $\displaystyle\limsup_{p\rightarrow+\infty}\dfrac{\psi(w_{p})}{\phi(w_{p})}=+\infty$.
Hence, for $p$ sufficiently large and $\lambda>0$, we deduce that $I_{\lambda}(w_{p})= \phi(w_{p})-\lambda\psi(w_{p})<0$.
Since $U_{\lambda}$ is a global minimum of the function $I_{\lambda}$ in $\phi^{-1}((-\infty,r))$ and $w_{p}\in\phi^{-1}((-\infty,r))$, we get that
\[
I_{\lambda}(U_{\lambda}) \leq I_{\lambda}(w_{p})<0=I_{\lambda}(0_{E}),
\]
so $U_{\lambda}\neq 0_{E}$.  The proof is complete.
\end{proof}

\begin{theorem}\label{theo4.2}
Assume that the following assumptions holds:
\begin{itemize}

\item[(H2)] there exist two real constants $c>0$  and $\eta>0$, such that
 $$
 F((i,j),t)<-ct^{2},\quad \forall (i,j)\in [1,m]_{\mathbb{Z}}\times[1,n]_{\mathbb{Z}} \quad and \quad \lvert t\rvert <\eta;
 $$

\item[(H3)] there exist  real constants $ a, b, T, \alpha$ such that $a>0$, $T>0$,  and  $1<\alpha<2$ such that
$$F((i,j),t)<a\lvert t\rvert^{\alpha}+b,\quad\forall (i,j)\in [1,m]_{\mathbb{Z}}\times[1,n]_{\mathbb{Z}}\quad and \quad \lvert t\rvert \geq T.$$
\end{itemize}
Then, for any  parameter $\lambda \in \left( \dfrac{\lambda_{mn}}{2c}, +\infty\right) $, the problem (\ref{eqn1.1}) has at least one nontrivial solution. 
\end{theorem}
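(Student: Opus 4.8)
The plan is to obtain the nontrivial solution as a global minimizer of the energy functional $I_{\lambda}=\phi-\lambda\psi$ from (\ref{eqn3.11}), via the direct method in the form of Theorem \ref{theo2.3}. Since $E$ is finite dimensional and each $F((i,j),\cdot)=\int_{0}^{\,\cdot}f((i,j),t)\,dt$ is continuously differentiable, the functional $I_{\lambda}$ is continuous on $E$, hence weakly lower semicontinuous, and $I_{\lambda}\in C^{1}(E,\mathbb{R})$ with $I_{\lambda}'(U)=MU-\lambda H(U)$ by (\ref{eqn3.12}). Consequently, once coercivity of $I_{\lambda}$ is established, Theorem \ref{theo2.3} produces a point $U_{\lambda}\in E$ with $I_{\lambda}(U_{\lambda})=\inf_{E}I_{\lambda}$ that is a critical point of $I_{\lambda}$, hence a solution of (\ref{eqn1.1}); it then only remains to check that $U_{\lambda}\neq 0_{E}$.

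The coercivity of $I_{\lambda}$ is where (H3) is used. By (\ref{eqn3.15}) one has $\phi(U)\geq\frac{1}{2}\lambda_{1}\lVert U\rVert^{2}$, so it suffices to bound $\psi(U)$ by a quantity of strictly lower order in $\lVert U\rVert$. For each index $(i,j)$ I would split according to whether $\lvert u(i,j)\rvert\geq T$ or $\lvert u(i,j)\rvert<T$: in the first case use $F((i,j),t)<a\lvert t\rvert^{\alpha}+b$ from (H3), and in the second case use that the continuous function $F((i,j),\cdot)$ is bounded on the compact interval $[-T,T]$. Summing over the finitely many indices and using $\lvert u(i,j)\rvert\leq\lVert U\rVert$ yields an estimate of the form $\psi(U)\leq a\,mn\,\lVert U\rVert^{\alpha}+C_{0}$ with $C_{0}$ independent of $U$. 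Because $1<\alpha<2$,
\[
I_{\lambda}(U)=\phi(U)-\lambda\psi(U)\geq\frac{1}{2}\lambda_{1}\lVert U\rVert^{2}-\lambda a\,mn\,\lVert U\rVert^{\alpha}-\lambda C_{0}\to+\infty
\]
as $\lVert U\rVert\to+\infty$, so $I_{\lambda}$ is coercive and $U_{\lambda}$ exists.

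To rule out $U_{\lambda}=0_{E}$ I would use (H2) together with the hypothesis $\lambda>\lambda_{mn}/(2c)$. First note $I_{\lambda}(0_{E})=0$ because $F((i,j),0)=0$. Now fix any $W\in E\setminus\{0_{E}\}$ and pick $t>0$ small enough that $\lVert tW\rVert_{\infty}<\eta$; then (H2) applies at every component of $tW$ and gives $\psi(tW)\geq c\,t^{2}\lVert W\rVert^{2}$, while $\phi(tW)\leq\frac{1}{2}\lambda_{mn}t^{2}\lVert W\rVert^{2}$ by (\ref{eqn3.15}). Hence
\[
I_{\lambda}(tW)=\phi(tW)-\lambda\psi(tW)\leq\left(\frac{1}{2}\lambda_{mn}-\lambda c\right)t^{2}\lVert W\rVert^{2}<0,
\]
since $\lambda>\lambda_{mn}/(2c)$. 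As $U_{\lambda}$ is a global minimizer, $I_{\lambda}(U_{\lambda})\leq I_{\lambda}(tW)<0=I_{\lambda}(0_{E})$, so $U_{\lambda}\neq 0_{E}$, and $U_{\lambda}$ is the desired nontrivial solution. (The same geometry --- $I_{\lambda}(0_{E})=0$, $I_{\lambda}<0$ on small spheres, $I_{\lambda}$ coercive, and $-I_{\lambda}$ anticoercive hence automatically satisfying (PS) in finite dimension --- would also allow one to apply the Mountain Pass Lemma, Theorem \ref{theo2.4}, to $-I_{\lambda}$ and produce a critical point at a negative level of $I_{\lambda}$.)

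I expect the coercivity step to be the main obstacle, since (H3) only constrains $F((i,j),t)$ for $\lvert t\rvert\geq T$: one must treat the regime $\lvert t\rvert<T$ via continuity and compactness, keep track of the constants produced by the $mn$ summands, and then verify that the resulting term of order $\lVert U\rVert^{\alpha}$ is genuinely dominated by $\frac{1}{2}\lambda_{1}\lVert U\rVert^{2}$, which is exactly the point of the restriction $1<\alpha<2$. The weak lower semicontinuity, the identification of the minimizer with a critical point, and the negativity estimate at small test functions are all routine.
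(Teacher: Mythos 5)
Your proposal is correct in substance and follows essentially the same route as the paper: coercivity of $I_{\lambda}$ from (H3) (the paper estimates the sum with H\"older's inequality, while you split indices according to $\lvert u(i,j)\rvert\ge T$ or $<T$, which is if anything more careful since (H3) only controls $F$ for $\lvert t\rvert\ge T$), then Theorem \ref{theo2.3} to obtain a global minimizer, then (H2) with $\lambda>\lambda_{mn}/(2c)$ evaluated on a small test vector to force a negative minimum value, hence a nontrivial critical point. One caveat, which you share with the paper: your inequality $\psi(tW)\ge c\,t^{2}\lVert W\rVert^{2}$ requires $F((i,j),t)\ge ct^{2}$ for $\lvert t\rvert<\eta$, whereas (H2) as literally written says $F((i,j),t)<-ct^{2}$, which would instead give $\psi\le -c\lVert\cdot\rVert^{2}$ and $I_{\lambda}>0$ near the origin; the paper's own proof makes the identical sign slip (it writes $\psi(U)\le -c\lVert U\rVert^{2}$ and then concludes $I_{\lambda}(U)\le\left(\lambda_{mn}/2-\lambda c\right)\lVert U\rVert^{2}$), so this is an inconsistency in the stated hypothesis rather than a defect peculiar to your argument.
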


\begin{proof}

Let  $U\in E$ such that $\lVert U\rVert $ is large enough. From (\ref{eqn3.10}) and according to the conditions (H{3}), we have 
\[
\psi(U)\leq a\displaystyle\sum_{i=1}^{m}\displaystyle\sum_{j=1}^{n}\lvert u(i,j)\rvert^{\alpha} + mnb.
\]
By the H\"{o}lder inequality, we get that
\begin{eqnarray*}
\psi(U) & \leq & an^{\frac{2-\alpha}{2}}\displaystyle\sum_{i=1}^{m}(\displaystyle\sum_{j=1}^{n}\lvert u(i,j)\rvert^{2})^{\frac{\alpha}{2}} + mnb.\\
       & &  \leq a(mn)^{\frac{2-\alpha}{2}}(\displaystyle\sum_{i=1}^{m}\displaystyle\sum_{j=1}^{n}\lvert u(i,j)\rvert^{2})^{\frac{\alpha}{2}} + mnb\\
       & & \leq a(mn)^{\frac{2-\alpha}{2}}\lVert U\rVert^{\alpha} + mnb.        
\end{eqnarray*}
Then, owing to (\ref{eqn3.11}) and from (\ref{eqn3.15}), one immediately has 
\[
I_{\lambda}(U)\geq\dfrac{\lambda_{1}}{2}\lVert U\rVert^{2} - a(mn)^{\frac{2-\alpha}{2}}\lambda\lVert U\rVert^{\alpha}-mnb\lambda, 
\]
for any $U\in E$ with $\lVert U\rVert$ is large enough.

Since $1<\alpha<2$, $I_{\lambda}(U)\rightarrow +\infty$ as $\lVert U\rVert\rightarrow +\infty$, which implies that the functional  
$I_{\lambda}$ is coercive. Since $f((i,j),.)$ is continuous for all $(i,j)\in [1,m]_{\mathbb{Z}}\times[1,n]_{\mathbb{Z}}$, 
then $ I_{\lambda}$ is continuous and bounded from below. Therefore, by Theorem \ref{theo2.3}, we deduce that $I_{\lambda}$  
attains its minimum at some point $\tilde{U_{\lambda}}\in E$ which is also the critical point of $I_{\lambda}$.

 On the other hand, we will show that $\tilde{U_{\lambda}}\neq0_{E}$. Let $\lambda \in \left( \dfrac{\lambda_{mn}}{2c}, +\infty\right)$ and $U\in E$ 
 such that $\lvert u(i,j)\rvert<\eta$,  $\forall(i,j)\in [1,m]_{\mathbb{Z}}\times[1,n]_{\mathbb{Z}}$. According to (H{2}), we have   
\[
  F((i,j),u(i,j)) \leq- c \lvert u(i,j)\rvert^{2},   \quad   \forall(i,j)\in [1,m]_{\mathbb{Z}}\times[1,n]_{\mathbb{Z}}.
\]
Then, for one $U\in E$ such that $\lVert U\rVert =\eta'$, where $\eta'=\eta\sqrt{mn}$, the relations (\ref{eqn3.10}) and (\ref{eqn3.11}) give  
 \[
  \psi(U)\leq-c\lVert U\rVert^{2}
\]
and 

\[
I_{\lambda}(U)\leq \left( \dfrac{\lambda_{mn}}{2} - \lambda c\right) \lVert U\rVert^{2}.
\]
Then by the definition of $\tilde{U_{\lambda}}$, we prove that $I_{\lambda}(\tilde{U_{\lambda}})\leq \left( \dfrac{\lambda_{mn}}{2} - \lambda c\right) \eta'<0$,
which implies that  $\tilde{U_{\lambda}}\neq0_{E}$. The proof is complete.
\end{proof}
 
\begin{theorem}\label{theo4.3}
Suppose that the condition (H{2}) is satisfied and suppose additionally that
\begin{itemize}
\item[(H{4})] there exist $A>0$ such that 
$$
\displaystyle\lim_{\lvert t\rvert\rightarrow \infty}\sup\dfrac{F((i,j),t)}{t^{2}}<A,\quad \forall (i,j)\in [1,m]_{\mathbb{Z}}\times[1,n]_{\mathbb{Z}}.
$$
\end{itemize}
Then, for each $\lambda\in \left( 0, \dfrac{\lambda_{1}}{2A}\right) $ the  problem (\ref{eqn1.1}) has at least one nontrivial  solution.  
\end{theorem}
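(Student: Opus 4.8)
The plan is to follow the template of the proof of Theorem~\ref{theo4.2}, with (H{4}) replacing (H{3}) as the mechanism for coercivity: first use (H{4}) to show that $I_{\lambda}$ is coercive on $E$ for every $\lambda\in\left(0,\lambda_{1}/(2A)\right)$; then apply the direct method (Theorem~\ref{theo2.3}) to obtain a global minimizer of $I_{\lambda}$, which is automatically a critical point and hence a solution of (\ref{eqn1.1}); and finally use (H{2}) to rule out the trivial minimizer.

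For the coercivity step I would turn the asymptotic bound in (H{4}) into a global quadratic estimate for $\psi$. Since the index set $[1,m]_{\mathbb{Z}}\times[1,n]_{\mathbb{Z}}$ is finite, one can fix a single constant $A'$ with $\max_{(i,j)}\big(\limsup_{|t|\to\infty}F((i,j),t)/t^{2}\big)<A'<A$ and a single threshold $T>0$ with $F((i,j),t)\le A't^{2}$ for all $(i,j)$ and all $|t|\ge T$; as $F$ is continuous, $D:=\max\{F((i,j),t):(i,j)\in[1,m]_{\mathbb{Z}}\times[1,n]_{\mathbb{Z}},\ |t|\le T\}$ is finite, so $F((i,j),t)\le A't^{2}+D$ for all $(i,j)$ and all $t\in\mathbb{R}$. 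Summing over $(i,j)$ and combining (\ref{eqn3.10}), (\ref{eqn3.11}) and (\ref{eqn3.15}),
\[ I_{\lambda}(U)\geq\left(\frac{\lambda_{1}}{2}-\lambda A'\right)\lVert U\rVert^{2}-\lambda\,mnD, \]
and since $\lambda<\lambda_{1}/(2A)<\lambda_{1}/(2A')$ the leading coefficient is positive, so $I_{\lambda}(U)\to+\infty$ as $\lVert U\rVert\to+\infty$. Because $E$ is finite dimensional and each $f((i,j),\cdot)$ is continuous, $I_{\lambda}\in C^{1}(E,\mathbb{R})$ is continuous, hence weakly lower semicontinuous, and coercive, so Theorem~\ref{theo2.3} yields $\widetilde{U}_{\lambda}\in E$ with $I_{\lambda}(\widetilde{U}_{\lambda})=\inf_{E}I_{\lambda}$ and $I'_{\lambda}(\widetilde{U}_{\lambda})=0$; thus $\widetilde{U}_{\lambda}$ solves (\ref{eqn1.1}).

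It remains to check $\widetilde{U}_{\lambda}\neq 0_{E}$, and I expect this to be the crux. Since $I_{\lambda}(0_{E})=0$, it suffices to produce one $U\in E$ with $I_{\lambda}(U)<0$; (H{2}) enters here through a test vector concentrated near the origin, e.g.\ $V\in E$ with every entry equal to a small $s\in(0,\eta)$, so that $\lVert V\rVert_{\infty}=s<\eta$ and (H{2}) can be applied entrywise to estimate $\psi(V)=\sum_{i,j}F((i,j),s)$, which one then compares against the bound $\phi(V)\le\tfrac{\lambda_{mn}}{2}\lVert V\rVert^{2}$ from (\ref{eqn3.15}) to get $I_{\lambda}(V)<0$. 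Then $I_{\lambda}(\widetilde{U}_{\lambda})\le I_{\lambda}(V)<0=I_{\lambda}(0_{E})$, so $\widetilde{U}_{\lambda}\neq0_{E}$. The main obstacle is exactly this last comparison: (H{2}) controls $F$ only on the small interval $|t|<\eta$, so one must verify that this local information, together with the spectral bounds on $\phi$, really forces $I_{\lambda}$ to be strictly negative somewhere, while $\lambda$ remains in the coercivity window $\left(0,\lambda_{1}/(2A)\right)$ dictated by (H{4}); keeping a firm grip on the interplay of the constants $c$, $A$, $\lambda_{1}$ and $\lambda_{mn}$ is the delicate point. (Alternatively one could run the Mountain Pass Lemma, Theorem~\ref{theo2.4}, with (H{2}) giving the mountain geometry near $0$ and (H{4})-coercivity giving the (PS) condition in this finite-dimensional setting, but locating the nontrivial critical level below the same threshold is the same difficulty.)
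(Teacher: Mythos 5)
Your coercivity argument is fine and in fact more careful than the paper's: the paper passes from (H{4}) directly to $I_{\lambda}(U)\geq\left(\tfrac{\lambda_{1}}{2}-\lambda A\right)\lVert U\rVert^{2}$ ``for $U$ sufficiently large'', whereas you insert a uniform constant $A'<A$, a uniform threshold, and the additive constant $D$, which is the right way to make that step airtight. The subsequent application of Theorem \ref{theo2.3} to get a global minimizer/critical point is exactly what the paper does.

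The genuine gap is the nontriviality step, and it is not merely a delicate bookkeeping of constants: as you suspected, it cannot be closed the way you propose. With $I_{\lambda}=\phi-\lambda\psi$, hypothesis (H{2}) gives $\psi(V)\leq -c\lVert V\rVert^{2}$ for any $V$ with $\lVert V\rVert_{\infty}<\eta$, hence
\begin{equation*}
I_{\lambda}(V)\;\geq\;\phi(V)+\lambda c\lVert V\rVert^{2}\;\geq\;\left(\tfrac{\lambda_{1}}{2}+\lambda c\right)\lVert V\rVert^{2}\;>\;0
\end{equation*}
for every small nonzero test vector: the inequality in (H{2}) points the wrong way, $0_{E}$ is a strict local minimizer, and no choice of $c$, $A$, $\lambda_{1}$, $\lambda_{mn}$ or of $\lambda$ in the coercivity window $\left(0,\lambda_{1}/(2A)\right)$ produces $I_{\lambda}(V)<0$ from small constant vectors. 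In fact no argument can fill this step from (H{2}) and (H{4}) alone: take $f((i,j),t)=-2ct$, so $F((i,j),t)=-ct^{2}$, which satisfies (H{2}) (with constant $c/2$) and (H{4}) for every $A>0$; the system (\ref{eqn3.3}) becomes $(\textbf{\textit{M}}+2\lambda c\,I)U=0$, which has only the trivial solution because $\textbf{\textit{M}}$ is positive definite. For comparison, the paper's own proof does not close this gap either: it dismisses nontriviality with ``arguing as in the proof of Theorem \ref{theo4.1}'', but that argument uses (H{1}), which is not assumed here, while the analogous argument from Theorem \ref{theo4.2} requires $\lambda>\lambda_{mn}/(2c)$ (and suffers from the same sign problem), which is incompatible with $\lambda\in\left(0,\lambda_{1}/(2A)\right)$ in general. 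So your instinct that this is the crux is correct; both your proposal and the paper's proof establish only the existence of a global minimizer, not that it is nontrivial.
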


\begin{proof}
Firstly, we show that the functional $ I_{\lambda}$ is coercive. The assumption (H{4}) yields the existence of a constant $C>0$ such that 
\[
F((i,j),t)<At^{2}, \quad \forall  \lvert t\rvert>C  \quad \text{and}   \quad  \forall(i,j)\in [1,m]_{\mathbb{Z}}\times[1,n]_{\mathbb{Z}}.
\] 
For $U\in E$ sufficiently large (taking $\lvert u(i,j)\rvert>C$), from  (\ref{eqn3.11}) and (\ref{eqn3.15}), it follows that
\[
I_{\lambda}(U)\geq\left( \dfrac{\lambda_{1}}{2}-\lambda A\right) \lVert U\rVert^{2}.
\]
Then, for all $\lambda<\dfrac{\lambda_{1}}{2A}$, we obtain $I_{\lambda}(U)\rightarrow +\infty$ as $\lVert U\rVert\rightarrow +\infty$, so $ I_{\lambda}$ is coercive. 
 Since $f((i,j),.)$ is continuous, then $ I_{\lambda}$ is weakly continuous and G\^{a}teaux differentiable, therefore according to Theorem \ref{theo2.3},  
 we deduce that the functional $ I_{\lambda}$ admits a critical point $\tilde{U}$.

Arguing as in the proof of Theorem \ref{theo4.1}, we get that $\tilde{U}\neq 0_{E}$. The proof is complete.
\end{proof}

\begin{theorem}\label{theo4.4}
Assume that the following assumptions holds
\begin{itemize}

\item[(H{5})] there exist two functions  $\alpha  :[1,m]_{\mathbb{Z}}\times[1,n]_{\mathbb{Z}}\rightarrow (0 , +\infty) $, $\beta   :[1,m]_{\mathbb{Z}}\times[1,n]_{\mathbb{Z}}\rightarrow \mathbb{R} $ 
and a constant $M>0$ such that
	$$  
	F((i,j),t)\geq \alpha(i,j)t^{2} +\beta(i,j) , \quad \forall (i,j)\in [1,m]_{\mathbb{Z}}\times[1,n]_{\mathbb{Z}} ,\quad \lvert t\rvert>M;
	$$

\item[(H{6})] $\displaystyle\lim_{\lvert t\rvert\rightarrow 0}\dfrac{F((i,j),t)}{t^{2}}=0,\quad \forall (i,j)\in [1,m]_{\mathbb{Z}}\times[1,n]_{\mathbb{Z}}.$

\end{itemize}	
Then, for each $\lambda >\dfrac{\lambda_{mn}}{2\alpha^{-}}$, the  problem (\ref{eqn1.1}) has at least one nontrivial solution, 
where 
$$
\alpha^{-}=\min\{ \alpha(i,j);(i,j)\in [1,m]_{\mathbb{Z}}\times[1,n]_{\mathbb{Z}} \}.
$$
\end{theorem}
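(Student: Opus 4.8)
The plan is to apply the Mountain Pass Lemma (Theorem~\ref{theo2.4}) to the $C^{1}$ functional $I_{\lambda}=\phi-\lambda\psi$ on the finite dimensional space $E$, using (H6) to produce the geometry near the origin and (H5) both to produce a point where $I_{\lambda}$ is negative and to verify the Palais--Smale condition. Note first that $I_{\lambda}(0_{E})=0$, since $\phi(0_{E})=0$ and $F((i,j),0)=0$ for every $(i,j)$.

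First I would extract from (H5) a lower bound on $\psi$ valid on all of $E$. Since $t\mapsto F((i,j),t)$ and $t\mapsto\alpha(i,j)t^{2}+\beta(i,j)$ are continuous, hence bounded on the compact set $[-M,M]$, there is a constant $C>0$ such that $F((i,j),t)\ge\alpha(i,j)t^{2}+\beta(i,j)-C$ for all $(i,j)\in[1,m]_{\mathbb{Z}}\times[1,n]_{\mathbb{Z}}$ and all $t\in\mathbb{R}$; summing over $(i,j)$ and using $\alpha(i,j)\ge\alpha^{-}$ gives
\[
\psi(U)\ge\alpha^{-}\lVert U\rVert^{2}+B,\qquad B:=\sum_{i=1}^{m}\sum_{j=1}^{n}\beta(i,j)-mnC .
\]
Combining this with (\ref{eqn3.11}) and the upper bound in (\ref{eqn3.15}) yields
\[
I_{\lambda}(U)\le\Bigl(\tfrac{\lambda_{mn}}{2}-\lambda\alpha^{-}\Bigr)\lVert U\rVert^{2}-\lambda B,\qquad\forall U\in E .
\]
For $\lambda>\lambda_{mn}/(2\alpha^{-})$ the coefficient of $\lVert U\rVert^{2}$ is strictly negative, so $I_{\lambda}(U)\to-\infty$ as $\lVert U\rVert\to+\infty$. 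In particular, if $I_{\lambda}(U_{n})$ is bounded below then $(U_{n})$ is bounded, and since $\dim E<\infty$ it has a convergent subsequence; thus $I_{\lambda}$ satisfies the (PS) condition. Moreover, choosing any $U_{0}\in E$ with $\lVert U_{0}\rVert$ large enough gives $I_{\lambda}(U_{0})<0$ and $\lVert U_{0}\rVert\ge\rho$, which is assumption (ii) of Theorem~\ref{theo2.4}.

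Next I would use (H6) for the local geometry. Fix $\lambda>\lambda_{mn}/(2\alpha^{-})$ and set $\epsilon=\lambda_{1}/(4\lambda)>0$. By (H6) there is $\delta>0$ with $\lvert F((i,j),t)\rvert\le\epsilon t^{2}$ for all $(i,j)\in[1,m]_{\mathbb{Z}}\times[1,n]_{\mathbb{Z}}$ and all $\lvert t\rvert\le\delta$. Since $\lVert U\rVert_{\infty}\le\lVert U\rVert$, every $U$ with $\lVert U\rVert\le\delta$ satisfies $\psi(U)\le\epsilon\lVert U\rVert^{2}$, hence by (\ref{eqn3.11}) and the lower bound in (\ref{eqn3.15}),
\[
I_{\lambda}(U)\ge\tfrac{\lambda_{1}}{2}\lVert U\rVert^{2}-\lambda\epsilon\lVert U\rVert^{2}=\tfrac{\lambda_{1}}{4}\lVert U\rVert^{2}.
\]
Taking $\rho=\delta$ and $a=\tfrac{\lambda_{1}}{4}\rho^{2}>0$ we obtain $I_{\lambda}(U)\ge a$ for all $\lVert U\rVert=\rho$, which is assumption (i). By Theorem~\ref{theo2.4}, $I_{\lambda}$ then has a critical value $c\ge a>0$; the associated critical point $\tilde U$ is a solution of (\ref{eqn1.1}) and, since $I_{\lambda}(\tilde U)=c>0=I_{\lambda}(0_{E})$, we conclude $\tilde U\ne0_{E}$.

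There is no deep obstacle here; the argument is a standard mountain pass in finite dimensions. The only points that need a little care are the passage from the large-$\lvert t\rvert$ inequality (H5) to a global quadratic lower bound for $\psi$ (which forces one to absorb the behaviour on $\lvert t\rvert\le M$ into the constant $C$), and checking that the stated range $\lambda>\lambda_{mn}/(2\alpha^{-})$ is exactly what makes $\tfrac{\lambda_{mn}}{2}-\lambda\alpha^{-}<0$ while still leaving room to choose $\epsilon=\lambda_{1}/(4\lambda)$ small in the local estimate.
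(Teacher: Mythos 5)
Your proof is correct and follows essentially the same route as the paper: a mountain pass argument where (H5) gives the anti-coercivity estimate $I_{\lambda}(U)\le(\tfrac{\lambda_{mn}}{2}-\lambda\alpha^{-})\lVert U\rVert^{2}+\text{const}$ (yielding both (PS) and a point with negative energy) and (H6) gives the local lower bound near the origin. In fact you are slightly more careful than the paper in two spots: you absorb the $\lvert t\rvert\le M$ range of (H5) into an additive constant before claiming the global bound on $\psi$, and you take $\epsilon=\lambda_{1}/(4\lambda)$ in the (H6) estimate, whereas the paper's inequality (\ref{eqn4.6}) silently drops the factor $\lambda$.
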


\begin{proof}
Fix $\lambda>\dfrac{\lambda_{mn}}{2\alpha^{-}}$. Firstly, we will check that $I_{\lambda}$ satisfies the PS condition. 
Let $\{ u_{n}\}\subset E$ be a sequence such $I_{\lambda}(u_{n})$ is bounded and $I'_{\lambda}(u_{n})\rightarrow 0$ as $n\rightarrow +\infty$, then there exists a constant $B>0$ 
such that $\lVert I_{\lambda}(u_{n})\rVert\leq B$. By (\ref{eqn3.10}), and from condition (H{5}), we infer that
   \begin{equation}\label{eqn4.4} 
   \psi (u_{n})\geq \alpha^{-}\lVert u_{n}\rVert^{2}+mn\beta^{-}.
   \end{equation}
Therefore, by (\ref{eqn3.11})-(\ref{eqn3.15}) and from (\ref{eqn4.4}), it follows that
\begin{equation}\label{eqn4.5}
-B\leq I_{\lambda}(u_{n})\leq\left( \dfrac{\lambda_{mn}}{2}-\lambda\alpha^{-}\right)  \lVert u_{n}\rVert^{2} - mn\lambda\beta^{-}, \quad \forall n \in \mathbb{N},
\end{equation}
so, for any $n \in \mathbb{N}$,
\[
\left( \lambda\alpha^{-} - \dfrac{\lambda_{mn}}{2}\right)  \lVert u_{n}\rVert^{2}\leq B - mn\lambda\beta^{-}.
\]
Since $\lambda>\dfrac{\lambda_{mn}}{2\alpha^{-}}$, $\{ u_{n}\}$ is a bounded sequence in $E$, which is a mn-dimensional space.
Thus, $\{ u_{n}\}$ possesses a convergent subsequence, this prove that $I_{\lambda}$ satisfies the PS condition.

Next, we need to prove the assumption (i) of Theorem  \ref{theo2.4}. In fact, from  (H{6}), there exists a constant $\mu>0$ such that 
\[
\lvert F((i,j),t) \rvert\leq\dfrac{\lambda_{1}}{4}t^{2} ,\quad \forall \lvert t\rvert\leq\mu \quad and \quad \forall (i,j)\in [1,m]_{\mathbb{Z}}\times[1,n]_{\mathbb{Z}}.
\]
Then, for any $U\in E$, with $\lVert U\rVert\leq\mu$ and from (\ref{eqn3.10})-(\ref{eqn3.15}), we have
\begin{equation}\label{eqn4.6}
I_{\lambda}(U)\geq \dfrac{\lambda_{1}}{2}\lVert U\rVert^{2} - \dfrac{\lambda_{1}}{4}\lVert U\rVert^{2}=\dfrac{\lambda_{1}}{4}\lVert U\rVert^{2}.
\end{equation}

Let $B_{\mu}=\{U \in E : \lVert U\rVert\leq\mu\}$ and take $\delta=\dfrac{\lambda_{1}}{4}\mu^{2}$, then one has 
\[
I_{\lambda}(U)\geq\delta>0 , \quad \forall U\in \partial B_{\mu}.
\]
Thus, the assumption (i) of Theorem \ref{theo2.4} is satisfied. It remains to show the  assumption (ii)  of Theorem \ref{theo2.4}.
For this, let $U^{\ast}$ be such that $\lVert U^{\ast} \rVert =1$ and a large enough real $t$. 
By (\ref{eqn4.5}), one has 
\[
I_{\lambda}(tU^{\ast})\leq(\dfrac{\lambda_{mn}}{2}-\lambda\alpha^{-})\lVert tU^{\ast}\rVert^{2} - mn\lambda\beta^{-}=(\dfrac{\lambda_{mn}}{2}-\lambda\alpha^{-})t^{2} - mn\lambda\beta^{-}.
\]
Since $\lambda>\dfrac{\lambda_{mn}}{2\alpha^{-}}$, we have $I_{\lambda}(tU^{\ast})\rightarrow -\infty$ as $t\rightarrow +\infty$, so for $t_{0}>\mu$, we have 
$\hat{U}=t_{0}U^{\ast}\in E\setminus B_{\mu}$ and $I_{\lambda}(\hat{U})<0$, which yield our conclusion.
 
Finally, our aim is to apply the  Theorem \ref{theo2.4}. Then, there exists at least one critical value $C\geq\delta\delta>0$ to $I_{\lambda}$. 
If we note that  $U_{\lambda}$ is the critical point associated with the value $C$, we have $I_{\lambda}(U_{\lambda})=C$, so $U_{\lambda}$ is a solution to the problem (\ref{eqn1.1}). 
Since  $I_{\lambda}(0_{E})=0$ and $C>0$ then $U_{\lambda}$. The proof is complete.
\end{proof}

\end{document}